\numberwithin{equation}{section}
\theoremstyle{plain}
\newtheorem{theorem}{Theorem}[section]
\newtheorem{lemma}[theorem]{Lemma}
\theoremstyle{definition}
\newtheorem{definition}[theorem]{Definition}
\newtheorem{case[theorem]}{Case}
\def\dH{\dim_{{\mathcal H}}}
\def\R{\Bbb R}
\theoremstyle{remark}
\newtheorem{remark}[theorem]{Remark}
\numberwithin{equation}{section}
\newcommand{\abs}[1]{\lvert#1\rvert}
\begin{document}

\title{\parbox{14cm}{\centering{On radii of spheres determined by
      subsets of Euclidean space}}}


\author{Bochen Liu}
\address{Department of Mathematics \\ University of Rochester\\ Rochester, NY 14627}
\email{bliu19@z.rochester.edu}
\keywords{Falconer-type problems, radii, intersection}
\subjclass[2010]{28A75, 42B20}

\begin{abstract}  
In this paper we consider the problem of how large
the Hausdorff dimension of $E\subset\R^d$ needs to be in order to
ensure that the radii
set of $(d-1)$-dimensional spheres determined by $E$ has positive Lebesgue
measure. We also study the question of how often can a neighborhood of
a given radius repeat. We obtain two results. First, by applying a
general mechanism developed in \cite{mul} for studying
Falconer-type problems, we prove that a neighborhood of
a given radius cannot repeat more often than the statistical bound if
$\dH(E)>d-1+\frac{1}{d}$; In $\R^2$, the dimensional threshold is sharp. Second, by proving an
intersection theorem, we prove for a.e $a\in\R^d$, the radii set of
$(d-1)$-spheres with center $a$ determined by $E$ must have positive
Lebesgue measure if $\dH(E)>d-1$, which is a sharp bound for
this problem.
 
\end{abstract}
\maketitle

\section{Introduction}

\vskip.125in

The classical Falconer distance conjecture states that if 
a set $E \subset {\Bbb R}^d$, $d \ge 2$,
has Hausdorff dimension greater than $\frac{d}{2}$,
then the one-dimensional Lebesgue measure ${\mathcal L}^1(\Delta(E))$
of its distance set,
$$ \Delta(E):=\{|x-y|\in\Bbb R: x,y \in E\},$$ 
is positive, where $|\cdot|$ denotes the Euclidean distance. Falconer
gave an example based on the integer lattice showing that the exponent $\frac{d}{2}$ is best possible. The best results currently known, culminating almost three decades of efforts by Falconer \cite{Fal86}, Mattila \cite{Mat87}, Bourgain \cite{B94}, and others, are due to Wolff \cite{W99} for $d=2$  and  Erdo\v{g}an \cite{Erd05} for $d\ge 3$. They prove that 
${\mathcal L}^1(\Delta(E))>0$ if 
$$dim_{{\mathcal H}}(E)>\frac{d}{2}+\frac{1}{3}.$$

It is natural to consider related problems, where distances
determined by $E$ may be replaced by other geometric objects,
such as triangles \cite{tri}, volumes \cite{vol}, angles
\cite{ang1}, \cite{ang2}, and others. Generally, if $$\Phi:(\R^d)^{k+1}\rightarrow
\R^m$$ for some $1\leq m \leq \binom{d+1}{2}$, we
define a configuration set $$\Delta_\Phi(E)=\{\Phi(x^1,\dots,
x^{k+1}):x^j\in E\}$$ and ask how large $\dH(E)$ needs to be
to ensure $\mathcal{L}^m(\Delta_\Phi(E))>0$. For example, in the distance
problem, $k=1, \Phi(x^1,x^2)=|x^1-x^2|$; in the volume problem,
$k=d, \Phi(x^1,\dots,x^{d+1})=|\det(x^{d+1}-x^1,\dots,x^{d+1}-x^d)|$; in
the angle problem,
$k=1, \Phi(x^1,x^2)=\frac{x^1\cdot x^2}{|x^1||x^2|}$.
\vskip.125in
In this paper, we consider the problem of the distribution of radii of
spheres 
determined by $(d+1)$-tuples of points from $E$. 

\begin{definition}
We say a sphere is determined by a
$(d+1)$-tuple $(x^1,\dots,x^{d+1})$ if it is the unique sphere passing
through all the points $x^1,\dots,x^{d+1}$. We say a shpere is determined by a
set $E$ if it is determined by a
$(d+1)$-tuple $(x^1,\dots,x^{d+1})\in E\times\cdots\times E$.
\end{definition}

 Let
$R(x^1,\dots, x^{d+1})$ be the
radius of the unique 
$(d-1)$-dimensinal sphere determined by  $(x^1,\dots,x^{d+1})$
and it equals $0$ if such a sphere does not exist or it is not unique.  We obtain two types of results. First, we estimate how often a neighborhood of a given radius occurs, in the sense defined below. Second, we find the optimal dimension such that the Lebesgue measure of the set of radii is positive. Our main geometric results are the following.
\begin{theorem}\label{main}
Let $E\subset \R^d$ be compact and $\nu$ be a Frostman
measure on $E$. Then $\dH(E)>d-1+\frac{1}{d}$ implies
\begin{equation}\label{incieq}
(\nu\times\dots\times\nu)\{(x^1,\dots,x^{d+1}):\abs{R(x^1,\dots,x^{d+1})-t}<\epsilon\}\lesssim\epsilon
\end{equation}
where the implicit constant is uniform in $t$ on any bounded set.
\vskip.125in
Moreover, $\dH(E)>d-1+\frac{1}{d}$
implies $$\mathcal{L}(R(E\times\dots\times E))>0.$$ When $d=k=2$, this result is sharp in the sense that
\eqref{incieq} does not generally hold when $\dH(E)<\frac{3}{2}$.
\end{theorem}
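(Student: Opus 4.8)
The function $R$ is a Falconer-type configuration map $\Phi = R\colon (\R^d)^{d+1}\to\R$ in the sense of the introduction, with $k=d$ and $m=1$, so the plan is to run the general mechanism of \cite{mul} on it. That mechanism reduces a bound of the form \eqref{incieq} to two ingredients: a non-degeneracy (rotational curvature) hypothesis on the incidence relation $Z_t=\{(x^1,\dots,x^{d+1}):R(x^1,\dots,x^{d+1})=t\}$, and the convergence of a Frostman energy integral whose critical exponent must then be identified with $d-1+\frac1d$. I would first record the basic geometry of $R$: for generic $x^1,\dots,x^d$ these $d$ points span a hyperplane $H$ and lie on a unique $(d-2)$-sphere of some radius $\rho$ inside $H$, the spheres through them form a one-parameter pencil with centers on the line through the center of that $(d-2)$-sphere perpendicular to $H$, and their radii sweep out $[\rho,\infty)$. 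Consequently the fiber $\{x^{d+1}:R(x^1,\dots,x^{d+1})=t\}$ is, off a lower-dimensional degenerate locus, a union of (at most two) genuine $(d-1)$-spheres of radius $t$ passing through $x^1,\dots,x^d$.

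This fiber description is the geometric heart of the argument: it exhibits the fibers of $R$ as spheres of nonvanishing Gaussian curvature, which is exactly the input the mechanism needs. I would verify that the associated generalized Radon transform $f\mapsto \int_{Z_t} f\,d\sigma_t$ (integration of the last variable over these spherical fibers) satisfies the Phong--Stein rotational-curvature condition away from the set of collinear, coincident, or cospherically degenerate tuples, and that this degenerate locus is a proper subvariety of $(\R^d)^{d+1}$ carrying no $\nu\times\cdots\times\nu$-mass, because an $s$-Frostman measure with $s>d-1$ annihilates every hyperplane. The curvature then yields the standard gain of $\tfrac{d-1}{2}$ derivatives for each spherical fiber. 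I expect this curvature verification, performed uniformly in $t$ and with honest control of the degenerate locus, to be the main obstacle, since $R$ is considerably less symmetric than the distance function and the bookkeeping across all $d+1$ variables is delicate.

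Granting the curvature input, the mechanism expresses \eqref{incieq} through an energy integral in which the $\tfrac{d-1}{2}$ curvature gain is weighed against the $(d+1)$-linear Frostman interaction of the points; carrying out the reduction (fixing a generic subconfiguration and optimizing how the available dimension is allocated across the $d$ source points) produces the critical exponent $d-1+\frac1d$, which for $d=2$ coincides with $\frac32$. Above this exponent the push-forward $R_\ast(\nu\times\cdots\times\nu)$ has a bounded density, which is precisely \eqref{incieq} with an implicit constant uniform in $t$ on bounded sets. The positivity statement $\mathcal L(R(E\times\cdots\times E))>0$ then follows formally: \eqref{incieq} forces $R_\ast(\nu\times\cdots\times\nu)$ to be absolutely continuous with density in $L^\infty$, while the total mass it places on $\{R>0\}$ equals $\nu(E)^{d+1}>0$ (the degenerate locus having zero mass), so the support of this density, which lies in $R(E\times\cdots\times E)$, must have positive Lebesgue measure.

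For the sharpness assertion in $\R^2$, I would run a Falconer-type lattice construction. Fix a rapidly increasing sequence $q_i$ and let $E$ be a nested intersection of $\delta_i$-neighborhoods of the scaled lattices $q_i^{-1}\mathbb{Z}^2\cap[0,1]^2$, with $\delta_i$ chosen so that $\dim_{\mathcal H}(E)=s$ for any prescribed $s<\frac32$. Because circumradii of lattice triangles take values with very high multiplicity (governed by the number of representations as sums of two squares, equivalently the number of lattice points on circles of a given radius), for a suitable radius $t$ and scale $\epsilon$ the set of triples with $\abs{R-t}<\epsilon$ carries $\nu\times\nu\times\nu$-measure far exceeding $\epsilon$. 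Quantifying this excess against the Frostman normalization of $\nu$ shows that \eqref{incieq} cannot hold once $s<\frac32$, establishing sharpness when $d=k=2$.
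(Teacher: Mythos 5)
Your proposal follows the paper's first move (running the mechanism of \cite{mul} on the translation-invariant map $R=R_0(x^{d+1}-x^1,\dots,x^{d+1}-x^d)$ with $k=d$, $m=1$), but it never actually produces the exponent $d-1+\frac1d$, and the step where it would have to is precisely where the paper's real work lies. The version of the mechanism quoted in the paper (Theorem \ref{t1}) asks for the Fourier decay estimates \eqref{hatmu} on the measure $\mu_t$ carried by the level set $\{R_0=t\}$, with threshold $d-\gamma/k$. The paper obtains $\gamma=d-1$ via the parametrization $\{u:R_0(u)=1\}=\{(\sigma_0+\sigma_1,\dots,\sigma_0+\sigma_d):\sigma_i\in S^{d-1}\}$ (modulo a null set): in each of the three required directions the phase splits into a \emph{difference or sum of two independent sphere variables}, so each Fourier transform factors into two sphere integrals and decays like $(1+|\xi|)^{-(d-1)}$, twice the single-sphere rate. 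Your accounting — ``the standard gain of $\frac{d-1}{2}$ derivatives for each spherical fiber,'' then an unspecified optimization — would, if inserted into the $d-\gamma/k$ formula, give only $d-\frac{d-1}{2d}$; the bilinear doubling from the $\sigma_0+\sigma_i$ structure is what yields $d-\frac{d-1}{d}=d-1+\frac1d$, and your proposal neither identifies this structure nor derives the exponent from the rotational-curvature hypothesis you propose to verify (which is not the hypothesis of the theorem you would be invoking). As written, the critical exponent is asserted, not proved.

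The sharpness argument has a more decisive problem: the Falconer-type lattice construction cannot violate \eqref{incieq}. The multiplicity of circumradii of lattice triangles that you invoke is controlled by divisor-type bounds: a circle of radius $\rho$ contains at most $\rho^{o(1)}$ lattice points, so the mass carried by triples on a common circle is smaller than the statistical count by every polynomial factor, and a direct count of lattice triples with circumradius in a window of width $\epsilon$ gives $\approx\epsilon q^6$ out of $q^6$ triples — exactly the statistical bound, with no excess. Violating \eqref{incieq} requires a \emph{polynomial} excess $\epsilon^{\beta}$ with $\beta<1$, which arithmetic multiplicity cannot supply. The paper's example (after Mattila \cite{Mat87}) is anisotropic rather than arithmetic: since the $\epsilon$-neighborhood of a circle of radius $\approx 100$ contains a $\sqrt{\epsilon}\times\epsilon$ rectangle with long side in the tangent direction, one takes $E=\bigl(\bigcup_k (C_\alpha+k)\bigr)\times[-200,200]$ so that, after translating the configuration to align the rectangle with a line $x=n$, the rectangle captures $\nu$-mass $\gtrsim\epsilon^{\alpha}\cdot\epsilon^{1/2}$ uniformly over pairs $(x,y)$ in two fixed unit squares. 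This gives a lower bound $\epsilon^{\frac12+\alpha}\gg\epsilon$ whenever $\alpha<\frac12$, i.e.\ $\dH(E)=1+\alpha<\frac32$, which is the sharpness claim; no lattice count of this kind can reproduce it.
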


In contrast to the incidence result, using Mattila's classical
estimate on the dimension of intersections (see Theorem \ref{Mat84}), one can prove
that 
\begin{equation}
  \label{rad=r}
\dH(E)>d-1\Longrightarrow\mathcal{L}(R(E\times\dots\times
E))=\R^+.  
\end{equation}

It's the optimal result because there is no unique $(d-1)$-dimensional
sphere passing through $d+1$ points in a hyperplane. However, one can never get this bound by improving the incidence
result above due to its sharpness.

\vskip.125in
We also prove an intersection result where rotations and translations
are replaced by dilations and translations (see Theorem
\ref{interthm}). From this intersection theorem we deduce the following.
\begin{theorem}\label{intercor}
Given $E\subset\R^d$ with $\dH(E)>d-1$, then for a.e. $a\in\R^d$,
$$\mathcal{L}(\{r>0:E\text{ determines } S^{d-1}_{a,r}\})>0,$$
where $S^{d-1}_{a,r}$ is the $(d-1)$-dimensional sphere with center $a$ and radius $r$.
\end{theorem}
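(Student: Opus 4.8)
The plan is to pass through the dilation--translation intersection theorem (Theorem~\ref{interthm}) and then upgrade a lower bound on the dimension of the intersection into the statement that $E$ genuinely determines the sphere. Write $\Sigma=S^{d-1}_{0,1}$ for the unit sphere; then $S^{d-1}_{a,r}=a+r\Sigma$ is precisely a translate of a dilate of $\Sigma$, which is exactly the family of sets to which Theorem~\ref{interthm} applies. Since $\dH(\Sigma)=d-1$, I would apply the intersection theorem with the fixed shape $\Sigma$ as the second set and $E$ as the first, obtaining the following: for almost every center $a\in\R^d$ there is a set $G_a\subset(0,\infty)$ of positive Lebesgue measure such that for every $r\in G_a$,
\begin{equation}\label{dimbound}
\dH\!\left(E\cap S^{d-1}_{a,r}\right)\ \ge\ \dH(E)-1 .
\end{equation}
The gain of a full unit of codimension here is the same one that appears in Mattila's classical slicing estimate (Theorem~\ref{Mat84}), now transported from the rotation--translation group to the dilation--translation group.

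The decisive step is to read off \emph{determination} from \eqref{dimbound}, and this is exactly where the threshold $\dH(E)>d-1$ enters. That hypothesis forces $\dH(E)-1>d-2$, so by \eqref{dimbound} the set $E\cap S^{d-1}_{a,r}$ has Hausdorff dimension strictly greater than $d-2$ for every $r\in G_a$. On the other hand, any affine hyperplane $H\subset\R^d$ meets $S^{d-1}_{a,r}$ in a $(d-2)$-dimensional sphere (or in at most a single point), a set of Hausdorff dimension at most $d-2$. Consequently $E\cap S^{d-1}_{a,r}$ cannot be contained in any hyperplane, and therefore it contains $d+1$ affinely independent points $x^1,\dots,x^{d+1}$. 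The unique sphere through these points is $S^{d-1}_{a,r}$ itself, so $E$ determines $S^{d-1}_{a,r}$. Hence $G_a\subset\{r>0:E\text{ determines }S^{d-1}_{a,r}\}$, and since $\mathcal{L}(G_a)>0$ for a.e.\ $a$, the theorem follows. This also makes the sharpness transparent: at $\dH(E)=d-1$ one may place $E$ inside a hyperplane, where every intersection $E\cap S^{d-1}_{a,r}$ is coplanar and no sphere is determined.

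It remains to indicate how \eqref{dimbound} is produced, which is the analytic heart of the matter. Fixing $a$, I would slice $\nu$ by the level sets of the distance function $\rho_a(x)=\abs{x-a}$, whose fibers are exactly the spheres $S^{d-1}_{a,r}$; disintegrating $\nu$ along $\rho_a$ yields conditional measures $\sigma_{a,r}$ supported on $E\cap S^{d-1}_{a,r}$ together with the radius distribution $\mu_a=(\rho_a)_*\nu$. The goal is a single energy inequality of the shape
\begin{equation}\label{energy}
\int_{B}\int_0^\infty I_t(\sigma_{a,r})\,d\mu_a(r)\,da\ \lesssim\ I_{t+1}(\nu),\qquad t<\dH(E)-1 ,
\end{equation}
over a bounded set $B$ of centers; the right-hand side is finite because $\nu$ is a Frostman measure of dimension exceeding $t+1$. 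A companion estimate $\int_B\|\mu_a\|_{L^2}^2\,da\lesssim I_1(\nu)<\infty$ shows that for a.e.\ $a$ the distribution $\mu_a$ is absolutely continuous with support of positive measure, so $\sigma_{a,r}$ carries positive mass for $r$ in a set of positive measure. Combining the two, for a.e.\ $a$ there is a positive-measure set of $r$ along which $\sigma_{a,r}\neq0$ and $I_t(\sigma_{a,r})<\infty$ for all $t<\dH(E)-1$, which is precisely \eqref{dimbound}. I expect the main obstacle to be the clean evaluation of the geometric kernel in \eqref{energy}: after integrating out $r$ one is left with the reciprocal-gradient mass of the perpendicular bisector of $x$ and $y$, and one must show it is $\lesssim\abs{x-y}^{-1}$ uniformly on $B$, so that the pair energy degrades by exactly one power of $\abs{x-y}$. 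Controlling this kernel, rather than the determination step, is where the real work lies.
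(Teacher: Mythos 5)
Your proposal is correct, and its skeleton is the same as the paper's: the paper proves Theorem \ref{intercor} precisely by specializing Theorem \ref{interthm} to $\Gamma=S^{d-1}$ (for which the admissible set of centers $\{z\in\R^d: E\subset\bigcup_{r}\Gamma_{z,r}\}$ is all of $\R^d$, as you implicitly use) and then upgrading the bound $\dH(E\cap S^{d-1}_{a,r})\geq \dH(E)-1>d-2$ to determination of the sphere. Where you genuinely differ is in both supporting arguments. For the determination step, the paper invokes Lemma \ref{indu}, proved by induction on dimension using the existence of hyperplane slices of $F$ of dimension $>k-3$ (citing Mattila's book); your argument is more elementary and direct: a hyperplane meets a sphere in a set of dimension at most $d-2$, so the intersection is not coplanar, hence contains $d+1$ affinely independent points, which pin down the sphere uniquely. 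Your version also connects more tightly to the paper's definition of ``determines,'' which requires an actual $(d+1)$-tuple, whereas Lemma \ref{indu} concludes only that $S^{d-1}$ is the unique sphere containing all of $F$ and leaves the passage to a tuple implicit. For the intersection bound itself, your sketch is real-variable: disintegration of $\nu$ along $\rho_a(x)=\abs{x-a}$ plus the bisector kernel estimate $\int_B \delta(\abs{x-a}-\abs{y-a})\,da\lesssim \abs{x-y}^{-1}$, which does hold because on the perpendicular bisector the gradient of $a\mapsto\abs{x-a}-\abs{y-a}$ has magnitude $\abs{x-y}/\rho$ with $\rho$ bounded for $a\in B$ and $x,y$ in a compact set. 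The paper instead proves Theorem \ref{interthm} by Fourier analysis (Lemma \ref{lem2}): sliced measures $\mu_{a,r}$ defined by mollification, Plancherel in the $a$-variable, and the stationary-phase decay $\abs{\widehat{\psi\sigma}(\xi)}\lesssim\abs{\xi}^{-(d-1)/2}$ combined with the weight $g(r)$. The paper's method applies to any hypersurface with nonvanishing Gaussian curvature, while your bisector computation is specific to spheres; but spheres are all Theorem \ref{intercor} needs, and your route avoids Fourier decay entirely. Two cosmetic caveats: Theorem \ref{interthm} produces a positive-measure set of $r\in\R$, so one should note $S^{d-1}_{a,-r}=S^{d-1}_{a,r}$ to get radii $r>0$; and one should first pass to a compact subset of $E$ of dimension still exceeding $d-1$, a reduction the paper also leaves implicit.
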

The bound $d-1$ is sharp because a hyperplane can never determine a
$(d-1)$-dimensional sphere of finite radius.
\vskip.125in
{\bf Notation.} Throughout the paper, 

$X \lesssim Y$ means that there exists $C>0$
such that $X \leq CY$.

$S^{d-1}=\{x\in\R^d:|x|=1\}.$

For $A\subset\R^d$, $A_{a,r}=\{rx+a:x\in A\}$.

For a measure $\mu$ on $\R^d$ and a function $q(x)$ on $\R^d$, $q_*\mu$
is the measure on $q(\R^d)$ induced by $q(x)$.

$\hat{\mu}(\xi)=\int e^{-2\pi i x\cdot
  \xi}\,dx$ is the Fourier transform of the measure $\mu$.

\vskip.125in
\subsection*{Acknowledgement}
The author wishes to thank Professor Alex Iosevich for suggesting this
problem and comments that helped improve the manuscript.

\vskip.125in
\section{The Incidence Result}
\vskip.125in
In \cite{mul}, Grafakos, Greenleaf, Iosevich, and Palsson develop the
following general mechanism to solve Falconer-type problems.
\vskip.125in
We say $\Phi$ is translation invariant if it can be written
as $$\Phi_0(x^{k+1}-x^1,\dots,x^{k+1}-x^k).$$ 

\begin{theorem}[Grafakos, Greenleaf, Iosevich, and Palsson,
  2012] \label{t1} Suppose $\Phi$ is translation invariant and for
  some $\gamma>0$,
\begin{equation}\label{hatmu}
\begin{aligned}
\abs{\widehat{\mu_t}(-\xi,\xi,0,\dots,0)}&\lesssim
(1+\abs{\xi})^{-\gamma},\\
\abs{\widehat{\mu_t}(0,\xi,0,\dots,0)}&\lesssim
(1+\abs{\xi})^{-\gamma},\\
\abs{\widehat{\mu_t}(\xi,0,0,\dots,0)}&\lesssim
(1+\abs{\xi})^{-\gamma},
\end{aligned}
\end{equation}
where $\mu_t$ is the natural measure on
$\{u:\Phi_0(u)=t\}$ and the implicit constant
is uniform in $t$ on any bounded set. Then $\dH(E)>d-\frac{\gamma}{k}$ implies 
\begin{equation} \label{mineq}
(\nu\times\dots\times\nu)\{(x^1,\dots,x^{k+1}):\abs{\Phi(x^1,\dots,x^{k+1})-t}<\epsilon\}\lesssim\epsilon^m
\end{equation}
where $\nu$ is a Frostman measure on $E$ and the implicit constant
is uniform in $t$ in any bounded set. It follows that the $m$-dimensional Lebesgue measure
$\mathcal{L}^m(\Delta_\Phi (E))>0$. 
\end{theorem}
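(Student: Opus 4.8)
The plan is to run the Fourier-analytic scheme standard for Falconer-type problems, using the three frequency conditions \eqref{hatmu} to supply exactly the decay needed after the multilinear form has been reduced to its essential part. Write the left-hand side of \eqref{mineq} as the integral of the sharp cutoff $\mathbf 1_{\{|\Phi_0(u)-t|<\epsilon\}}$ against the product measure $\nu\times\dots\times\nu$. Since $\Phi$ is translation invariant, I would substitute $u^j=x^{k+1}-x^j$ for $j=1,\dots,k$, so that the constraint becomes $\abs{\Phi_0(u^1,\dots,u^k)-t}<\epsilon$ while each $u^j$ is distributed according to the reflected-translated copy of $\nu$ based at $x^{k+1}$.

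First I would replace the sharp cutoff by a smooth version of $\mu_t$. Because $\{u:\Phi_0(u)=t\}$ has codimension $m$ in $(\R^d)^k$ and $\mu_t$ is the induced measure, the coarea formula gives, up to a normalizing constant, $\epsilon^{-m}\mathbf 1_{\{|\Phi_0-t|<\epsilon\}}\,du\rightharpoonup d\mu_t$; hence the left side of \eqref{mineq} equals $\epsilon^m$ times a pairing of $\mu_t$ with the product of the $\nu$'s, up to a controlled error. By Parseval this pairing becomes a multilinear integral of $\widehat{\mu_t}(\xi^1,\dots,\xi^k)$ against $\prod_{j=1}^k\widehat{\nu}(\xi^j)$ and $\widehat{\nu}(\sum_j\xi^j)$. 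The entire problem is then to bound this integral uniformly for $t$ in a bounded set.

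The crux is the next step, and it is where the mechanism of \cite{mul} does its real work. The hypotheses control $\widehat{\mu_t}$ only along the three coordinate slices $(-\xi,\xi,0,\dots,0)$, $(0,\xi,0,\dots,0)$, and $(\xi,0,\dots,0)$, not on all of $(\R^d)^k$, so one cannot simply insert the decay bound into the full multilinear integral. The device is to organize the estimate as an iterated $L^2$ (that is, $TT^*$) argument: one applies Cauchy–Schwarz to collapse the $(k+1)$-linear form onto a symmetric expression in which a single active pair survives, the off-diagonal and the two diagonal contributions producing precisely the three frequency configurations in \eqref{hatmu}. Distributing the single decay exponent $\gamma$ across the $k$ difference variables is what converts the gain into the threshold $d-\gamma/k$; this bookkeeping, rather than any one estimate, is the main obstacle. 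Once the three slices carry the decay $(1+\abs{\xi})^{-\gamma}$, the remaining factors $\abs{\widehat\nu}^2$ are absorbed by the finite $s$-energy $\int\abs{\widehat\nu(\xi)}^2\abs{\xi}^{s-d}\,d\xi<\infty$, valid for any $s<\dH(E)$; choosing $d-\gamma/k<s<\dH(E)$, which is possible exactly because $\dH(E)>d-\gamma/k$, makes the integral converge with bound uniform in $t$, giving \eqref{mineq}.

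Finally I would deduce $\mathcal L^m(\Delta_\Phi(E))>0$ from \eqref{mineq} by a standard density argument. Let $\rho=\Phi_*(\nu\times\dots\times\nu)$, a measure on $\R^m$ of total mass $\nu(E)^{k+1}>0$ supported on the compact set $\Delta_\Phi(E)=\Phi(E^{k+1})$. Estimate \eqref{mineq} says exactly that $\rho(B(t,\epsilon))\lesssim\epsilon^m$ uniformly in $t$, i.e. $\rho$ has uniformly bounded upper $m$-density, so $\rho$ is absolutely continuous with respect to $\mathcal L^m$ with density in $L^\infty$. A measure of positive total mass that is absolutely continuous with bounded density must live on a set of positive Lebesgue measure, and therefore $\mathcal L^m(\Delta_\Phi(E))>0$.
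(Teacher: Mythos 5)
Your proposal cannot really be compared against an in-paper argument, because the paper does not prove Theorem \ref{t1} at all: it is imported verbatim from \cite{mul} and used as a black box, the paper's own contribution being the verification of the hypotheses \eqref{hatmu} for the radius function $R_0$ and the sharpness example. So your sketch has to stand on its own as a reconstruction of the Grafakos--Greenleaf--Iosevich--Palsson mechanism, and judged that way it has the right skeleton but a genuine gap at the decisive step. The skeleton is fine: mollify the sharp cutoff into the level-set measure $\mu_t$, pass to the Fourier side so the form becomes an integral of $\widehat{\mu_t}(\xi^1,\dots,\xi^k)$ against $\prod_j\widehat{\nu}(\xi^j)$ and $\overline{\widehat{\nu}(\xi^1+\dots+\xi^k)}$, dominate by energy integrals, and finish with the density argument; your final paragraph (bounded upper $m$-density implies absolute continuity with $L^\infty$ density, hence $\mathcal{L}^m(\Delta_\Phi(E))>0$) is correct and complete.

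The gap is that the crux is asserted rather than executed. The sentence ``apply Cauchy--Schwarz to collapse the $(k+1)$-linear form \dots the off-diagonal and the two diagonal contributions producing precisely the three frequency configurations in \eqref{hatmu}'' together with ``distributing the single decay exponent $\gamma$ across the $k$ difference variables \dots converts the gain into the threshold $d-\gamma/k$'' \emph{is} the theorem; everything else in your writeup is routine. To make this a proof you would need to specify in which variables and against which measures the iterated $L^2$/$TT^*$ step is performed, exhibit the expanded form as an integral of $\widehat{\mu_t}\otimes\overline{\widehat{\mu_t}}$ evaluated at the three slices, and then carry out the convergence computation showing that $I_s(\nu)<\infty$ for some $s>d-\frac{\gamma}{k}$ closes the estimate --- nothing in your bookkeeping, as written, rules out the exponent coming out as $\gamma$ or $\gamma/(k+1)$ instead of $\gamma/k$; that arithmetic is exactly what the multi-page argument in \cite{mul} establishes. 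A secondary soft spot: the reduction ``coarea gives $\epsilon^{-m}\mathbf{1}_{\{\abs{\Phi_0-t}<\epsilon\}}\,du\rightharpoonup d\mu_t$, hence the left side equals $\epsilon^m$ times a pairing up to controlled error'' is not automatic, because $\nu\times\dots\times\nu$ is singular: weak convergence tested against nice functions says nothing about pairing with a fractal measure, so the pairing must itself be defined through mollification of $\nu$, the comparison proved as an inequality in the needed direction, and the constants kept uniform in $t$.
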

\vskip.125in
\section{Proof of Theorem \ref{main}}
\vskip.125in
Since $R(x^1,\dots,x^{d+1})$ is translation invariant and it can be
written as $R_0(x^{d+1}-x^1,\dots,x^{d+1}-x^d)$,  by Theorem
\ref{t1} it suffices to show all the three inequalities in
\eqref{hatmu} hold for the natural measure $\mu_1$ on $\{(u_0,\dots,u_d)\in\R^d:
R_0(u^1,\dots,u^d)=1\}$. Observe that
\begin{displaymath}
\begin{aligned}
&\ \text{a } (d+1)\text{-tuple } (x^1,\dots,x^{d+1}) \text{ determines a
  sphere with radius }1\\
\Leftrightarrow &\ (0, x^{d+1}-x^1,\dots,x^{d+1}-x^d) \text{ determines a
sphere with radius } 1 \text{ passing through the origin}\\
\Leftrightarrow&\ x^{d+1}-x^i\in a+S^{d-1} \text{ for some } a\in S^{d-1}
\text{ and } 0\neq x^{d+1}-x^i\neq x^{d+1}-x^j, \forall i\neq j.\\
\end{aligned}
\end{displaymath}

Hence
\begin{equation}\label{changevar}
\{(u^1,\dots,u^d):R_0(u^1,\dots,u^d)=1\}=\{(\sigma_0+\sigma_1,\sigma_0+\sigma_2,\dots,\sigma_0+\sigma_d):\sigma_i\in
S^{d-1}\}-N,
\end{equation}
where $N$ is a set of measure $0$.

Let $\nu$ be the natural probability measure on $S^{d-1}$ and $\psi$ be some
smooth cut-off function which may vary from line to line. By changing
variables as \eqref{changevar},
\begin{equation}\label{muhat}
\begin{aligned}
\abs{\widehat{\mu_1}(\xi,-\xi,0,\dots,0)}&=\left|\int_{\{u:R_0(u)=1\}} e^{-2\pi i
  u\cdot(\xi,-\xi,0,\dots,0)}\,du\right|\\&=\left|\int\dots\int e^{-2\pi
i\xi\cdot(\sigma_1-\sigma_2)} \psi\ d\nu(\sigma_0)\dots
d\nu(\sigma_d)\right|\\&\lesssim\left|\iint e^{-2\pi
i\xi\cdot(\sigma_1-\sigma_2)}\psi\ d\nu(\sigma_1)\,d\nu(\sigma_2)\right|\\&=\left|\int_{S^{d-1}} e^{-2\pi\xi\cdot\sigma_1}\left(\int_{S^{d-1}}
e^{2\pi
i\xi\cdot\sigma_2}\psi\ d\nu(\sigma_2)\right)d\nu(\sigma_1)\right|\\&=\left|\int_{S^{d-1}}
e^{-2\pi\xi\cdot\sigma_1} a(\xi,\sigma_1)\,d\nu(\sigma_1)\right|.
\end{aligned}
\end{equation}

By stationary phase \cite{So93},
$|a(\xi,\sigma_1)|\lesssim(1+|\xi|)^{-\frac{d-1}{2}}$ and
\begin{equation}
\left|\int_{S^{d-1}} e^{-2\pi\xi\cdot\sigma_1}\left((1+|\xi|)^{\frac{d-1}{2}}a(\xi,\sigma_1)\right)d\nu(\sigma_1)\right|\lesssim(1+|\xi|)^{-\frac{d-1}{2}}.
\end{equation}

Hence
$$\abs{\widehat{\mu_1}(\xi,-\xi,0,\dots,0)}\lesssim(1+|\xi|)^{-(d-1)}.$$
\vskip.125in
For $\widehat{\mu_1}(\xi,0,\dots,0)$ and $\widehat{\mu_1}(0,\xi,0,\dots,0)$,
  after changing variables as above we have
\begin{displaymath}
\begin{aligned}
\widehat{\mu_1}(\xi,0,\dots,0)&=\int\dots\int e^{-2\pi
i\xi\cdot(\sigma_0+\sigma_1)} \psi\ d\nu(\sigma_0)\dots
d\nu(\sigma_d),\\
\widehat{\mu_1}(0,\xi,0,\dots,0)&=\int\dots\int e^{-2\pi
i\xi\cdot(\sigma_0+\sigma_2)} \psi\ d\nu(\sigma_0)\dots
d\nu(\sigma_d).
\end{aligned}
\end{displaymath}
By a similar argument,
$$\abs{\widehat{\mu_1}(\xi,0,\dots,0)},\ \abs{\widehat{\mu_1}(0,\xi,0,\dots,0)}\lesssim(1+|\xi|)^{-(d-1)},$$
which completes the proof of Theorem \ref{main}.
\vskip.25in
\section{Sharpness of Theorem \ref{main} in $\R^2$}
\vskip.125in
When $d=2$, Theorem \ref{main} says $\dH(E)>\frac{3}{2}$ implies
\eqref{mineq}, more precisely
\begin{equation}\label{rmineq}
(\nu\times\nu\times\nu)\{(x^1,x^2,x^3):\abs{R(x^1,x^2,x^3)-t}<\epsilon\}\lesssim\epsilon
\end{equation}
where $\nu$ is a Frostman measure on a compact set $E\subset
\R^2$ and the implicit constant is uniform in $t$ on any bounded set. Now we will show $\frac{3}{2}$
is sharp for \eqref{rmineq} with a counterexample motivated by Mattila
\cite{Mat87}.
\vskip.125in
Without loss of generality, fix $t=100$. Let $C_\alpha\subset[0,1]$
denote the Cantor set of dimension
$\alpha$ and $\nu$ denote the natural probability measure on $C_\alpha$. Let 
\begin{equation}
E=(\cup_{k=-200}^{200}(C_\alpha+k))\times[-200,200]
\end{equation}
and extend $\nu$ to $E$ in the natural way.

\vskip.25in
\begin{center}
\includegraphics[height=2.8in]{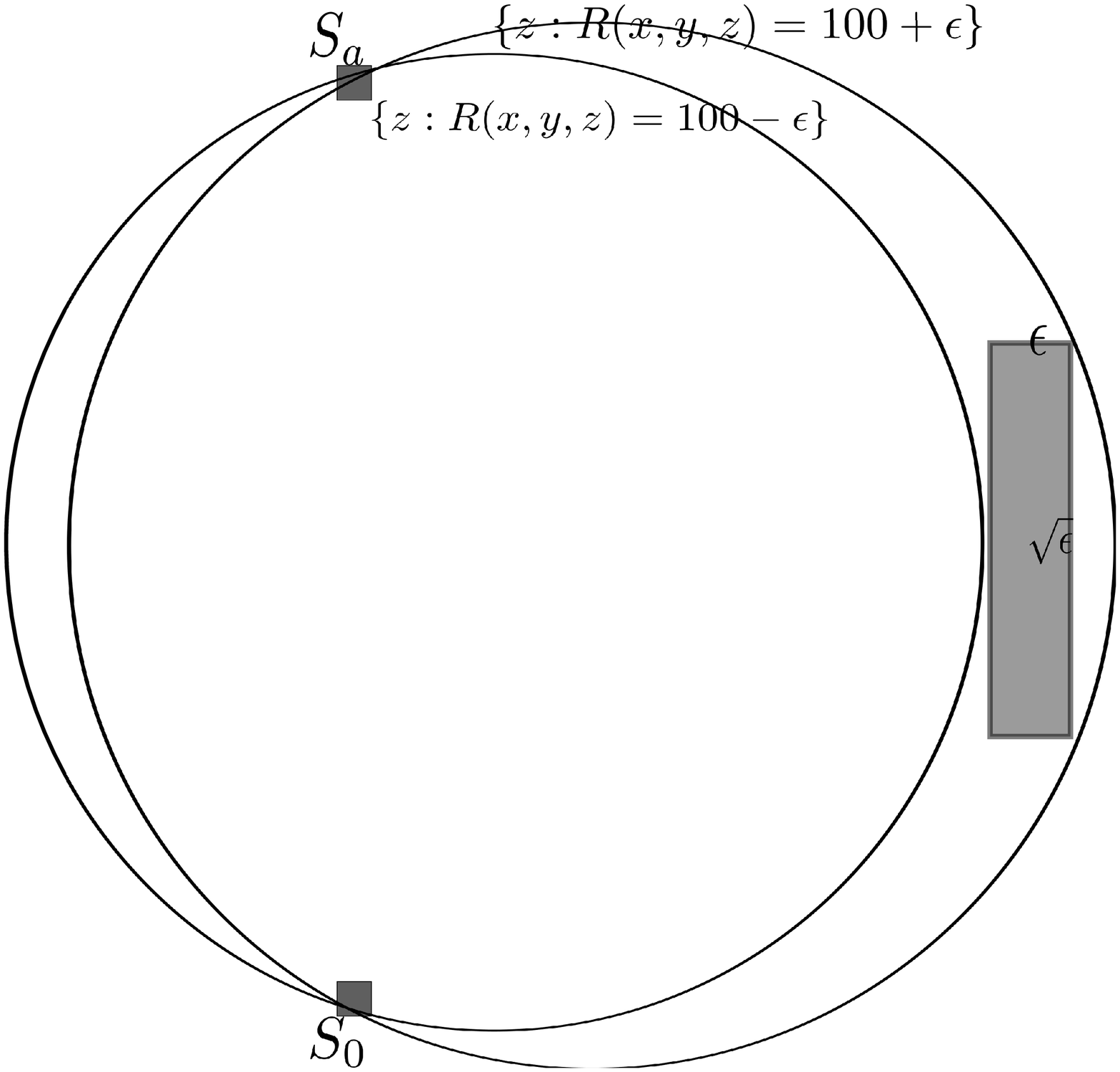}
\end{center}
\vskip.25in
Let $$S_a=\{(x,y)\in\R^2:0\leq x\leq 1, a\leq y\leq a+1\}.$$
\vskip.125in
When
$a\gg 1$, for all $x\in S_0,
y\in S_a$, the angle between the $y$-axis and the vector $y-x$ is very
small. Therefore  we can pick a $\sqrt{\epsilon}\times\epsilon$
rectangle in
$$\{z\in\R^2:\abs{R(x,y,z)-100}<\epsilon\}$$
such that its $\sqrt{\epsilon}$-edges are parallel
to the $y$-axis. (see the figure above). 
\vskip.125in
If $a$ changes continuously, the $\sqrt{\epsilon}\times\epsilon$
rectangle translates continuously without any rotation. Then there exsit an
$a_0\in(a-2,a)$ such that one of its $\sqrt{\epsilon}$-edges lies on a line $x=n$ for some $n\in\mathbb{Z}$. By the
construction of $E$, for all $x\in S_0, y\in S_{a_0}$,
\begin{equation}
\nu\{z\in\R^2:\abs{R(x,y,z)-100}<\epsilon\}\gtrsim\epsilon^{\frac{1}{2}}\cdot\epsilon^\alpha=\epsilon^{\frac{1}{2}+\alpha}.
\end{equation}

Hence
\begin{equation}\label{shineq}
\begin{aligned}
&\nu\times\nu\times\nu\{(x,y,z):\abs{R(x,y,z)-100}<\epsilon\}\\\gtrsim
&\nu\times\nu\times\nu\{(x,y,z):x\in S_0, y\in S_a,
\abs{R(x,y,z)-100}<\epsilon\}\\\gtrsim& \epsilon^{\frac{1}{2}+\alpha},
\end{aligned}
\end{equation}
which implies that when $\alpha<\frac{1}{2}$, \eqref{rmineq}
fails. 
\vskip.125in
It follows that for $\dH(E)<\frac{3}{2}$, \eqref{rmineq} is not generally
true, which proves the sharpness arguement of Theorem \ref{main}.

\begin{remark}
The sharpness of \eqref{rmineq} doesn't mean $\frac{3}{2}$ is the best
possible for radii problem, but for the method we use, it cannot be,
generally, improved.
\end{remark}

\begin{remark}
The example above cannot show the sharpness in higher
dimensions. For example, in $\R^3$, the rectangle we pick will
become a $\sqrt{\epsilon}\times\sqrt{\epsilon}\times\epsilon$ rectangle. If
we still construct a set like $C_\alpha\times C_\beta\times C_\gamma$, we
will have a lower bound
$\epsilon^{\frac{\alpha}{2}+\frac{\beta}{2}+\gamma}$ for
\eqref{mineq}. In nontrivial cases (i.e. $\dH(E)>2$) it is always true that
$\frac{\alpha}{2}+\frac{\beta}{2}+\gamma>\frac{1}{2}(\alpha+\beta+\gamma)>1$,
which does not contradict the upper bound. In fact, in many relavant problems, a
similar example can show the sharpness in $d=2$ but fail in $d\geq3$
(see, e.g. \cite{tri}, \cite{Mat87}).
\end{remark}
\vskip.25in
\section{An Intersection Theorem}
\vskip.125in
Given $A,B\subset \R^d$, we consider the behavior of the
intersection of $A\cap T_\alpha(B)$, where $T_\alpha$ is some family
of transformations. Mattila (\cite{Mat87}, \cite{Mat84}) proves a general
intersection theorem for orthogonal transformations. 
\begin{theorem}[Mattila, 1984]\label{Mat84}
In $\R^d$, let $s,t>0, s+t>d$ and $t>\frac{d+1}{2}$. If
$A,B\subset\R^d$ are Borel sets with $\mathcal{H}^s(A)>0,\mathcal{H}^t(B)>0$, then
for $\theta_d$ almost all $g\in O(d)$, 
$$\mathcal{L}(\{a\in\R^d:\dH(A\cap(gB+a)\geq s+t-d\})>0,$$
where $\theta_d$ is the Haar measure on $O(d)$.
\end{theorem}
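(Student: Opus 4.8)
The plan is to follow Mattila's original Fourier–analytic route, exploiting the identification of Hausdorff dimension with finiteness of Riesz energies. Recall that for a compactly supported Radon measure $\lambda$ and $0<\beta<d$ one has $I_\beta(\lambda)=\iint|x-y|^{-\beta}\,d\lambda(x)\,d\lambda(y)=c_{d,\beta}\int|\widehat\lambda(\xi)|^2|\xi|^{\beta-d}\,d\xi$, and that if some nonzero $\lambda$ carried by a compact set $K$ has $I_\beta(\lambda)<\infty$ then $\dH(K)\ge\beta$. Thus it suffices to produce, for $\theta_d$–a.e.\ $g$ and for a set of $a$ of positive Lebesgue measure, a nonzero measure supported on $A\cap(gB+a)$ of finite $\beta$–energy for every $\beta<s+t-d$.

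First I would fix, via Frostman's lemma, compactly supported probability measures $\mu$ on $A$ and $\nu$ on $B$ with $I_{s'}(\mu)<\infty$ and $I_{t'}(\nu)<\infty$ for any $s'<s$, $t'<t$; it is enough to run the argument with such $s',t'$ and let them increase to $s,t$. Next I would build approximate intersection measures by smoothing: writing $\nu_g=g_*\nu$ and $\psi_\epsilon$ for an approximate identity, set $d\lambda^\epsilon_{g,a}(x)=\big((\nu_g*\psi_\epsilon)(x-a)\big)\,d\mu(x)$, a measure concentrating on $A\cap(gB+a)$ as $\epsilon\to0$. A direct computation gives $\int_{\R^d}\|\lambda^\epsilon_{g,a}\|\,da=\mu(A)\nu(B)>0$ uniformly in $\epsilon$ and $g$, so these measures cannot all degenerate.

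The heart of the matter is the averaged energy bound. Expanding $I_\beta(\lambda^\epsilon_{g,a})$ on the Fourier side, carrying out the $a$–integration (which collapses to the diagonal by orthogonality of characters) and then averaging over $O(d)$, I expect to reach
\begin{equation*}
\int_{O(d)}\!\!\int_{\R^d} I_\beta(\lambda^\epsilon_{g,a})\,da\,d\theta_d(g)\ \lesssim\ \int_{\R^d}\!\!\int_{\R^d}|\widehat\mu(\xi-\eta)|^2\,|\xi|^{\beta-d}\,\sigma_\nu(|\eta|)\,d\eta\,d\xi,
\end{equation*}
where $\sigma_\nu(|\eta|)=\int_{O(d)}|\widehat\nu(g^{-1}\eta)|^2\,d\theta_d(g)$ is the spherical average of $|\widehat\nu|^2$ over the sphere of radius $|\eta|$. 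The rotational average is exactly what turns the anisotropic $|\widehat\nu|^2$ into a radial object, and it is here that the hypothesis on $t$ is consumed: the key input is the spherical–average estimate bounding $\sigma_\nu(R)$ by a negative power of $R$ governed by $t$, equivalently the integral identity $\int_0^\infty\sigma_\nu(R)R^{t'-1}\,dR\approx I_{t'}(\nu)$. Splitting the double integral into the region $|\xi-\eta|\lesssim|\eta|$ and its complement, using $I_{s'}(\mu)<\infty$ to absorb $|\widehat\mu|^2$ (so that the inner $\xi$–integral is $\lesssim|\eta|^{t'-d}$) and then integrating in $\eta$ against $\sigma_\nu$, one checks that the right–hand side is finite precisely when $s'+t'>d$ (so that $\beta=s'+t'-d>0$) and $t>\tfrac{d+1}{2}$, the latter being the threshold at which the spherical average is summable against the relevant Riesz weight.

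Finally I would pass to the limit $\epsilon\to0$. Since the averaged energy is bounded uniformly in $\epsilon$, Fatou's lemma gives, for $\theta_d$–a.e.\ $g$ and a positive–measure set of $a$, a weak-$*$ limit $\lambda_{g,a}$ with $I_\beta(\lambda_{g,a})<\infty$ and $\lambda_{g,a}\neq0$, and standard support considerations show it is carried by the closed set $A\cap(gB+a)$. The energy–dimension principle then yields $\dH(A\cap(gB+a))\ge\beta=s'+t'-d$ for positively many $a$ and a.e.\ $g$; letting $s'\uparrow s$, $t'\uparrow t$ (with a routine argument at the critical exponent, using $\mathcal H^s(A),\mathcal H^t(B)>0$) upgrades this to $s+t-d$, which is the assertion. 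I expect the main obstacle to be twofold: establishing the spherical–average decay in the sharp range — this is the genuinely harmonic–analytic step, and where $t>\tfrac{d+1}{2}$ is unavoidable — and making the limiting argument rigorous, in particular verifying that the limit measures are nonzero on a set of $a$ of positive measure and are not supported off the intersection.
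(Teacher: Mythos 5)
You should first be aware that the paper contains no proof of this statement: Theorem \ref{Mat84} is quoted verbatim from Mattila's 1984 paper \cite{Mat84} and used as a black box (to deduce \eqref{rad=r}, and as a model for Theorem \ref{interthm}). So your proposal can only be compared with Mattila's original argument, and in outline you have reproduced it faithfully: Frostman measures $\mu,\nu$ with $I_{s'}(\mu),I_{t'}(\nu)<\infty$ for $s'<s$, $t'<t$; smoothed intersection measures $d\lambda^{\epsilon}_{g,a}=\bigl((\nu_g*\psi_\epsilon)(\cdot-a)\bigr)\,d\mu$; Plancherel in the translation parameter $a$; rotational averaging, which correctly produces the spherical means $\sigma_\nu(|\eta|)$; and the energy--dimension principle plus weak-$*$ limits. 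The averaged-energy inequality you display is correct (it follows exactly by the Plancherel-in-$a$ computation you indicate) and is indeed the pivot of Mattila's proof.

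However, there is a genuine gap at precisely the step you flag as the "genuinely harmonic-analytic" one, and what you offer in its place is wrong. The input the argument needs is a \emph{pointwise} decay estimate for the spherical averages, of the form $\sigma_\nu(R)\lesssim I_{t'}(\nu)\,R^{1-t'}$ for $R\ge 1$, valid when $t'\ge\frac{d+1}{2}$; this is Mattila's key lemma, proved by inserting the stationary-phase asymptotics of $\widehat{\sigma_{S^{d-1}}}$ into $\sigma_\nu(R)\approx\iint\widehat{\sigma_{S^{d-1}}}\bigl(R(x-y)\bigr)\,d\nu(x)\,d\nu(y)$ and exploiting the oscillation (not just the size) of that kernel; this is exactly where $t>\frac{d+1}{2}$ is consumed. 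You assert this bound but describe it as "equivalently" the integral identity $\int_0^\infty\sigma_\nu(R)R^{t'-1}\,dR\approx I_{t'}(\nu)$. That identity is nothing but polar coordinates plus the Fourier representation of the energy; it holds for every $t'\in(0,d)$ with no hypothesis whatsoever, and it cannot yield any pointwise decay, since a finite integral is compatible with sparse spikes of $\sigma_\nu$. If the identity alone sufficed, the theorem would follow with no condition beyond $s+t>d$, which is not what Mattila's method gives. Two further real (if smaller) gaps: (i) your non-degeneracy argument uses only the first moment $\int\|\lambda^{\epsilon}_{g,a}\|\,da=1$, which is perfectly compatible with all weak-$*$ limits vanishing for a.e.\ $a$ (the mass can concentrate on a Lebesgue-null set of translations, as it does when $A$ and $B$ are too small); one needs in addition a uniform second-moment bound $\int\|\lambda^{\epsilon}_{g,a}\|^2\,da\lesssim 1$ (again via Plancherel in $a$ and the rotational average) together with Cauchy--Schwarz, which is the same quadratic trick the present paper uses in its proof of Theorem \ref{interthm}. (ii) The finiteness of $\iint|\widehat\mu(\xi-\eta)|^2\sigma_\nu(|\eta|)\,|\xi|^{\beta-d}\,d\eta\,d\xi$ is more delicate than the one-line splitting you indicate: the region $|\eta|\gg|\xi-\eta|$ must be handled using $I_{s'}(\mu)<\infty$ quantitatively, and the bookkeeping is where $\beta=s'+t'-d$ and $\beta>0$ enter. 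All of these steps are in \cite{Mat84} (see also Chapter 13 of \cite{M95}), but as written your proposal asserts, rather than proves, the two estimates that actually carry the theorem.
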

There are also intersection theorems on larger transformation
groups, e.g. similarities (\cite{Kah86}, \cite{Mat84}).
\begin{theorem}[Kahane,1986]
Let $G$ be a closed subgroup of $GL(n,\R)$ and let $\tau$ be a Haar
measure. Let $E$ and $F$ be two $\sigma$-compact subsets of
$\R^d$. Then for $\tau$-almost all $g\in G$ and $\epsilon>0$,
$$\mathcal{L}(\{a\in\R^d:\dH(E\cap(gF+a))\geq\dH(E)+\dH(F)-d-\epsilon\})>0.$$
\end{theorem}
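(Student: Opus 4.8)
The plan is to run the potential-theoretic (energy) method, exploiting the fact that a Borel set carrying a nonzero measure of finite $u$-energy has Hausdorff dimension at least $u$. I may assume $E,F$ are compact, reducing to compact pieces by $\sigma$-compactness. Fix $s<\dH(E)$ and $t<\dH(F)$. By Frostman's lemma there are probability measures $\mu$ on $E$ and $\lambda$ on $F$ with finite energies
$$I_s(\mu)=\iint \abs{x-x'}^{-s}\,d\mu(x)\,d\mu(x')<\infty,\qquad I_t(\lambda)=\iint \abs{y-y'}^{-t}\,d\lambda(y)\,d\lambda(y')<\infty.$$
Since $s,t$ may be taken arbitrarily close to $\dH(E),\dH(F)$ and the target exponent below can be any $u<s+t-d$, the loss of $\epsilon$ is exactly the accumulated slack; thus it suffices to produce, for $\tau$-a.e.\ $g$, a positive-measure set of $a$ on which $E\cap(gF+a)$ carries a nonzero measure of finite $u$-energy for a given $u<s+t-d$.

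To build such measures I mollify and restrict. Let $\lambda_{g,a}$ be the push-forward of $\lambda$ under $y\mapsto gy+a$, let $\psi_\delta$ be a smooth approximate identity supported in the ball of radius $\delta$, and set $d\mu^\delta_{g,a}(x)=(\psi_\delta*\lambda_{g,a})(x)\,d\mu(x)$, a measure that concentrates on $E\cap(gF+a)$ as $\delta\to0$. A direct Fubini computation gives the first-moment identity
$$\int_{\R^d}\|\mu^\delta_{g,a}\|\,da=\int_{\R^d}\iint \psi_\delta(x-gy-a)\,d\lambda(y)\,d\mu(x)\,da=\|\mu\|\,\|\lambda\|,$$
for every $g$ and every $\delta$, since integrating the bump over all translations $a$ contributes $\int\psi_\delta=1$.

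The heart of the matter is the matching energy bound, uniform in $\delta$, namely $\int_G\int_{\R^d} I_u(\mu^\delta_{g,a})\,da\,d\tau(g)\lesssim I_s(\mu)\,I_t(\lambda)$. Expanding $I_u(\mu^\delta_{g,a})$ and integrating out $a$ produces the autocorrelation $\Psi_\delta=\psi_\delta*\widetilde\psi_\delta$ and collapses the four-fold integral to
$$\iint \abs{x-x'}^{-u}\Big(\int_G\Psi_\delta\big((x-x')-g(y-y')\big)\,d\tau(g)\Big)\,d(\mu\times\mu)\,d(\lambda\times\lambda),$$
so everything reduces to controlling the averaged kernel $K_\delta(v,w)=\int_G\Psi_\delta(v-gw)\,d\tau(g)$ and comparing against the two energies; after an arbitrarily small further loss in $s,t$ one may assume Frostman regularity of $\mu,\lambda$, and a dyadic decomposition in $\abs{v}$ then shows the integral converges precisely when $u<s+t-d$. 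I expect this averaged-kernel estimate to be the main obstacle: it is exactly where the size of $G$ enters, since integrating over the group supplies the transversality between $E$ and $gF+a$ that a single bad $g$ (for instance when $gF$ is aligned with $E$) need not provide. Quantitatively one needs $\int_G\Psi_\delta(v-gw)\,d\tau(g)$ to be spread out enough that the singularity of $\abs{v}^{-u}$ is integrated away.

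Finally I combine the two moments. Taking $u$ in the range $0\le u<s+t-d$ (the substantive regime being $\dH(E)+\dH(F)>d$), the energy bound together with $\|\mu^\delta_{g,a}\|^2\lesssim I_u(\mu^\delta_{g,a})$ on the bounded support yields $\int_G\int_{\R^d}\|\mu^\delta_{g,a}\|^2\,da\,d\tau(g)\lesssim1$ uniformly in $\delta$; paired with the exact first moment, this prevents the mass from escaping to a Lebesgue-null set of $a$ (a Paley--Zygmund type comparison of $L^1$ and $L^2$ norms in $a$). Hence for $\tau$-a.e.\ $g$ one extracts, along a sequence $\delta\to0$, weak-$*$ limits $\mu_{g,a}$ supported in $E\cap(gF+a)$ that are nonzero on a set of $a$ of positive Lebesgue measure and satisfy $I_u(\mu_{g,a})<\infty$ by Fatou. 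Each such nonzero finite-energy measure forces $\dH(E\cap(gF+a))\ge u$, and letting $u\uparrow s+t-d$, $s\uparrow\dH(E)$, $t\uparrow\dH(F)$ gives $\dH(E\cap(gF+a))\ge \dH(E)+\dH(F)-d-\epsilon$ on a positive-measure set of $a$, as claimed.
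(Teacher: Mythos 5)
The paper offers no proof for you to be compared against: this theorem is quoted purely as background, with a citation to \cite{Kah86}, so your attempt has to stand on its own. Its skeleton is the right one --- mollified intersection measures $\mu^\delta_{g,a}$, the exact first moment in $a$, a group-averaged energy (second moment) bound, a Paley--Zygmund comparison, weak-$*$ limits and Fatou: this is exactly the standard potential-theoretic scheme behind Kahane's and Mattila's intersection theorems. But you never prove the one estimate that carries all of the content, the averaged-kernel bound for $K_\delta(v,w)=\int_G\Psi_\delta(v-gw)\,d\tau(g)$; you flag it as ``the main obstacle'' and move on, so what you have is a reduction, not a proof. Moreover, this is not a fixable detail at the stated level of generality: for an arbitrary closed subgroup of $GL(n,\R)$ no such bound can hold, because the theorem itself is false. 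For instance $G=\{I\}$ is a closed subgroup; taking compact $A,B\subset\R$ with $\dH(A)=\dH(B)=1$ but $A\cap(B+t)$ at most a single point for every $t$ (a classical digit-restriction construction), the sets $E=A\times A$ and $F=B\times B$ in $\R^2$ have $\dH(E)+\dH(F)-d=2$, yet $E\cap(F+a)$ is at most one point for every $a$, so the conclusion fails for every $\epsilon<2$. The hypothesis Kahane actually uses --- omitted in the paper's loose statement --- is that $G$ acts transitively (in a suitable sense) on $\R^d\setminus\{0\}$, e.g.\ the similarity group generated by dilations and rotations. Exploiting that hypothesis to prove the kernel bound \emph{is} the theorem; in the Fourier picture it amounts to showing $\int_{K}\abs{\hat{\lambda}(g^{T}\xi)}^2\,d\tau(g)\lesssim \abs{\xi}^{-t}$ for compact $K\subset G$, which is the similarity-group replacement for Mattila's spherical-average estimate (and the reason no condition like $t>\frac{d+1}{2}$ appears here, in contrast to Theorem \ref{Mat84}).

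Two further points would need repair even if you were granted that bound. First, for non-compact $G$ (the interesting case) the Haar measure is infinite, and your global second moment $\int_G\int_{\R^d}I_u(\mu^\delta_{g,a})\,da\,d\tau(g)$ genuinely diverges --- group elements of huge operator norm contribute unboundedly; the a.e.-$g$ conclusion has to be assembled by exhausting $G$ by countably many compact pieces, on each of which the $a$-integration is also confined to a bounded set (possible since $E$ and $F$ are compact, but this must be said). Second, in your Paley--Zygmund step the set $\{a:\|\mu^\delta_{g,a}\\|>0\}$ depends on $\delta$, so extracting one positive-measure set of $a$ that survives the limit $\delta\to0$ requires the usual extra care (Fatou applied to the energies along a subsequence, with the first moments passed to the limit), rather than the one-line comparison you give. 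These last two issues are standard and repairable; the missing kernel estimate, together with the missing transitivity hypothesis it depends on, is the genuine gap.
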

From
Theorem \ref{Mat84}, we know when $d=2$, for every $r>0$, there exists some $a\in\R^2$
such that $\dH(E\cap S_{a,r}^{d-1})\geq \dH(E)-1>0$. Hence
\eqref{rad=r} holds for $d=2$. In higher dimensions it is followed by the
following lemma.

\begin{lemma}\label{indu}
Suppose $F\subset S^{d-1}\subset\R^d$ and
$\dH(F)>d-2$, then $F$ uniquely determines $S^{d-1}$, i.e. $S^{d-1}$ is the only sphere
which contains all the points of $F$.
\end{lemma}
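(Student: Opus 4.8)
The plan is to argue by contradiction, exploiting the elementary fact that two distinct $(d-1)$-spheres in $\R^d$ can only meet inside a hyperplane, whose intersection with $S^{d-1}$ has dimension $d-2$.

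First I would suppose that, besides $S^{d-1}$, there is some other $(d-1)$-sphere $S'=S^{d-1}_{a,r}$ with $(a,r)\neq(0,1)$ that also contains every point of $F$, so that $F\subset S^{d-1}\cap S'$. Writing the defining equations $\abs{x}^2=1$ and $\abs{x-a}^2=r^2$ and subtracting them, the quadratic terms $\abs{x}^2$ cancel, leaving the linear relation $2a\cdot x=1+\abs{a}^2-r^2$. I would then split into two cases. If $a=0$, this relation forces $r=1$, i.e. $S'=S^{d-1}$, contrary to assumption; indeed, two distinct concentric spheres are disjoint, so they cannot both contain the nonempty set $F$. Hence $a\neq 0$, and the linear relation defines a genuine hyperplane $H=\{x:2a\cdot x=1+\abs{a}^2-r^2\}$, whence $F\subset S^{d-1}\cap H$.

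The key step is then to bound the dimension of this slice. The intersection of the unit sphere with a hyperplane is either empty, a single tangency point, or a $(d-2)$-dimensional sphere; in every case it is a smooth submanifold of dimension at most $d-2$, so $\dH(S^{d-1}\cap H)\leq d-2$. Combined with $F\subset S^{d-1}\cap H$, this gives $\dH(F)\leq d-2$, contradicting the hypothesis $\dH(F)>d-2$. The contradiction shows that no such $S'$ exists, so $S^{d-1}$ is the unique sphere through $F$.

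I do not expect a serious obstacle here: the entire argument rests on the single observation that distinct spheres intersect inside a hyperplane, after which the dimension bound is immediate and the hypothesis $\dH(F)>d-2$ is used exactly once. The only point needing a word of care is the degenerate concentric case $a=0$, which is dispatched by noting that $\dH(F)>d-2\geq 0$ (for $d\geq 2$) forces $F$ to be nonempty.
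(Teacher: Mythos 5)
Your proof is correct, but it is genuinely different from the paper's. The paper argues by induction on the dimension: the base case $d=2$ uses that a set of positive dimension is infinite and three points on a circle determine it, and the inductive step invokes a slicing result (cited from Mattila's book) to produce a hyperplane $P$ with $\dH(P\cap F)>k-3$, so that $P\cap F$ determines the equatorial sphere $P\cap S^{k-1}$, which together with one point of $F$ off $P$ pins down $S^{k-1}$. You instead argue directly by contradiction via the radical-hyperplane trick: subtracting the defining equations $\abs{x}^2=1$ and $\abs{x-a}^2=r^2$ of two distinct spheres shows their intersection lies in a hyperplane (or forces the spheres to coincide in the concentric case), and a sphere-hyperplane intersection has Hausdorff dimension at most $d-2$, contradicting $\dH(F)>d-2$ by monotonicity. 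Your route is shorter and more self-contained: it needs no induction and, importantly, no slicing theorem, only the elementary algebra of sphere equations and monotonicity of Hausdorff dimension; you also correctly flag and dispatch the degenerate concentric case, which the paper's write-up does not even mention. What the paper's scheme buys is a template that does not depend on the algebraic accident that the difference of two sphere equations is affine, so it could in principle be adapted to other families of hypersurfaces; for spheres themselves, however, your argument is the cleaner one.
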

\begin{proof}
It is followed by induction. For $d=2$, it's trivial because $F$ has
at least three points.

 Suppose it holds in $\R^{k-1}$. For $F\subset S^{k-1}\subset\R^k$ with $\dH(F)>k-2$, there
exists a hyperplane $P$ such that $\dH(P\cap F)>k-3$ (see, e.g. \cite{M95}. Then $P\cap F$ determines the lower dimensional sphere $P\cap
S^{k-1}$. Hence $F$ determines $S^{k-1}$ because there is at least one
point in $F\backslash (P\cap F)$.
\end{proof}
Note all the intersection results above try to determine when the translation
set has positive Lebesgue measure. Now let us consider when the
dilation set has positive Lebesgue measure.
\vskip.125in
\begin{theorem}\label{interthm}
Suppose $E\subset\R^d$ is compact with $\dH(E)=s>1$ and
$\Gamma\subset\R^d$ is a smooth
hypersurface with nonzero Gaussian curvature.
Then
\begin{equation}
  \label{eq:inter}
  \mathcal{L}(\{r\in\R:\dH(E\cap \Gamma_{a,r})\})\geq
s-1\})>0
\end{equation}
holds for a.e. $a\in\{z\in\R^d:E\subset\bigcup_{r\in\R}\Gamma_{z,r}\}.$
\end{theorem}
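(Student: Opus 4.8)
The plan is to prove \eqref{eq:inter} by a curvilinear slicing argument in the spirit of Marstrand and Mattila, but with the averaging taken over the center $a$ rather than over a rotation, so that the nonzero Gaussian curvature of $\Gamma$ supplies the transversality that rotations supply in Theorem \ref{Mat84}. I would fix a Frostman probability measure $\mu$ on $E$ with $I_t(\mu) := \iint |x-y|^{-t}\,d\mu(x)\,d\mu(y) < \infty$ for every $t < s$, and introduce the gauge $\rho$ of $\Gamma$, characterized by $\rho(z) = r \Leftrightarrow z \in r\Gamma$, so that $\Gamma_{a,r} = \{y : \rho(y-a) = r\}$ and, for $a$ in a bounded portion of $\mathcal{A} := \{z : E \subset \bigcup_r \Gamma_{z,r}\}$ (which suffices, by exhausting $\mathcal{A}$), the map $y \mapsto \rho(y-a)$ is a smooth submersion near $E$. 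Slicing $\mu$ along its level sets, I would use the mollified slices $d\mu_{a,r}^{\delta}(y) = (2\delta)^{-1}\mathbf{1}_{\{|\rho(y-a)-r|<\delta\}}\,d\mu(y)$, which satisfy $\int_{\R} \mu_{a,r}^{\delta}\,dr = \mu$ and whose weak limits $\mu_{a,r}$ as $\delta \to 0$ are supported on $E \cap \Gamma_{a,r}$. The whole problem then reduces to two bounds, both uniform in $\delta$: an energy bound $\int_{\mathcal{A}}\int_{\R} I_{s-1-\epsilon}(\mu_{a,r}^{\delta})\,dr\,da \lesssim I_{s-\epsilon}(\mu)$ and a mass bound $\int_{\mathcal{A}}\int_{\R}\mu_{a,r}^{\delta}(\R^d)^2\,dr\,da \lesssim I_1(\mu)$.

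The computation effecting this reduction is the heart of the matter. Expanding each double integral and integrating first in $r$ collapses the product of two indicators to the overlap of two intervals of length $2\delta$, contributing a factor $\lesssim \delta\,\mathbf{1}_{\{|\rho(y-a)-\rho(y'-a)|<2\delta\}}$; integrating next in $a$ leaves the quantity $m_\delta(y,y') := \mathcal{L}(\{a \in \mathcal{A} : |\rho(y-a)-\rho(y'-a)| < 2\delta\})$, the measure of a $\delta$-neighborhood of the bisector $\{a : \rho(y-a)=\rho(y'-a)\}$. Everything therefore rests on the claim
\[
m_\delta(y,y') \lesssim \frac{\delta}{|y-y'|},
\]
which, inserted back, produces exactly $I_{(s-1-\epsilon)+1}(\mu)=I_{s-\epsilon}(\mu)$ (finite since $s-\epsilon<s$) for the energy and $I_1(\mu)$ (finite since $s>1$) for the mass. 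I expect this bisector bound to be the main obstacle, and it is precisely where curvature enters: by the co-area formula $m_\delta(y,y') \lesssim \delta \int_{\{h=0\}\cap\mathcal{A}} |\nabla_a h|^{-1}\,d\mathcal{H}^{d-1}$ with $h(a) = \rho(y-a)-\rho(y'-a)$, and on the bisector the points $y-a$ and $y'-a$ lie on a common dilate $c\Gamma$, so $|\nabla_a h| = |\nabla\rho(y-a)-\nabla\rho(y'-a)|$ is the gap between the outer normals of $c\Gamma$ at two points separated by $|y-y'|$. Nonzero Gaussian curvature makes the Gauss map a local diffeomorphism with derivative bounded below, forcing $|\nabla_a h| \gtrsim |y-y'|$ over the bounded region $\mathcal{A}$ and hence the claim; for a flat $\Gamma$ the normals coincide, the bound fails, and indeed a hyperplane determines no sphere, matching the sharpness already noted.

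Finally I would let $\delta \to 0$. The mass bound, through weak-$L^2$ compactness of the densities $\mu_{a,r}^\delta(\R^d)$ on a fixed bounded interval together with $\int_\R \mu_{a,r}^\delta(\R^d)\,dr = 1$, shows that for a.e. $a$ the push-forward of $\mu$ under $y\mapsto\rho(y-a)$ is absolutely continuous with $L^2$ density of total mass $1$, so its density is positive on a set of $r$ of positive Lebesgue measure and the slices $\mu_{a,r}$ are nontrivial there; the energy bound, through Fatou, shows that for a.e. $a$ and a.e. $r$ one has $I_{s-1-\epsilon}(\mu_{a,r}) < \infty$. Intersecting the positivity set with the countably many full-measure energy sets attached to a sequence $\epsilon_n \to 0$, I obtain, for a.e. $a \in \mathcal{A}$, a set of $r$ of positive Lebesgue measure on which $\mu_{a,r}$ is a nonzero measure carried by $E \cap \Gamma_{a,r}$ with $I_{s-1-\epsilon_n}(\mu_{a,r}) < \infty$ for every $n$, whence $\dH(E \cap \Gamma_{a,r}) \ge s-1$. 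This is exactly \eqref{eq:inter}.
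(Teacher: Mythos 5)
Your slicing set-up, the reduction to the two integral bounds, and the limiting argument at the end are all sound, and for $\Gamma=S^{d-1}$ your proof goes through; the gap is exactly at the step you yourself call the heart of the matter, the bisector bound $m_\delta(y,y')\lesssim\delta/|y-y'|$. Nonzero Gaussian curvature makes the Gauss map of $\Gamma$ a \emph{local} diffeomorphism, so it gives $|\nabla\rho(y-a)-\nabla\rho(y'-a)|\gtrsim|y-y'|$ only when the two points $(y-a)/c$ and $(y'-a)/c$ of $\Gamma$ are close to one another; when they are far apart you need \emph{global} injectivity of the map $w\mapsto\nabla\rho(w)=n(w)/(w\cdot n(w))$, i.e.\ that no hyperplane is tangent to $\Gamma$ at two distinct points with the same orientation. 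That is a convexity-type condition which nonvanishing curvature does not imply for a piece of hypersurface, even after reducing to bounded star-shaped pieces. Concretely, in $\R^3$ take $f(x,y)=3x^2y-y^3-3y=\mathrm{Im}\bigl((x+iy)^3-3(x+iy)\bigr)$ on the annulus $\Omega=\{1/2<x^2+y^2<4\}$ and let $\Gamma=\{(x,y,H+f(x,y)):(x,y)\in\Omega\}$ with $H$ large. Then $\det D^2f=-36(x^2+y^2)\neq0$ on $\Omega$, so the Gaussian curvature never vanishes; every line through the origin meets $\Gamma$ at most once for $H$ large (a chord of the graph has slope at most $\max|\nabla f|$ over the horizontal, while a chord aimed at the origin has slope at least $(H-\max|f|)/2$); but $\nabla f(\pm1,0)=0$ and $f(\pm1,0)=0$, so the tangent plane at both points $(\pm1,0,H)$ is $\{z=H\}$ with the same upward normal, giving $\nabla\rho(1,0,H)=\nabla\rho(-1,0,H)=(0,0,1)/H$. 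For $y-a=(1,0,H)$, $y'-a=(-1,0,H)$ this puts a zero of $\nabla_a h$ on the bisector while $|y-y'|=2$. Worse, since $f$ is even in $x$, $h$ vanishes identically on the perpendicular-bisector hyperplane $\{a_1=\tfrac12(y_1+y_1')\}$, and a short computation (in suitable coordinates $|h|\approx c\,|u|\,|t|$, because $f_{xy}(1,0)=6\neq0$) gives $m_\delta(y,y')\gtrsim\delta\log(1/\delta)$ for suitable $E$: your claimed bound is not merely unproved but false for this $\Gamma$. Since the energy reduction integrates $m_\delta$ against $|y-y'|^{-(s-1-\epsilon)}\,d\mu\,d\mu$ and such pairs need not be $\mu\times\mu$-null, this is a genuine gap rather than a removable technicality.

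The paper's proof is built to avoid precisely this. It slices $\mu$ over the dilates just as you do, but proves the energy estimate (its Lemma \ref{lem2}) on the Fourier side: Plancherel in the center variable $a$ turns the $da$-integral into $\int|\hat\mu(a)|^2\,|\widehat{\psi\,d\sigma}(r(\xi-a))|^2\,da$, and curvature enters only through the stationary-phase decay $|\widehat{\psi\,d\sigma}(\xi)|\lesssim(1+|\xi|)^{-(d-1)/2}$, a purely local estimate valid for any smooth compactly supported piece with nonvanishing curvature and completely insensitive to distant parallel tangent planes; the pointwise transversality you require is replaced by an $L^2$ average in $a$. Your real-variable route is correct whenever the Gauss map of $\Gamma$ is globally injective (hence bi-Lipschitz on compact pieces), in particular for convex $\Gamma$ and for $\Gamma=S^{d-1}$, where $\nabla\rho(z)=z/|z|$ and the bisector bound is exact; so you do obtain Theorem \ref{intercor}, but not Theorem \ref{interthm} as stated. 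To repair the general case along your lines you would have to either add a convexity hypothesis or control $\mathcal{L}\{a:|h(a)|<2\delta\}$ near the degenerate critical set of $h$, uniformly in $(y,y')$, which your present argument does not address.
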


In particularly, letting $\Gamma=S^{d-1}$, Theorem \ref{intercor} follows from Theorem \ref{interthm} and Lemma \ref{indu}.
\vskip.25in
\section{Proof of Theorem \ref{interthm}}
\vskip.125in
If $E\subset\bigcup_{r\in\R}\Gamma_{a,r}$ for some $a\in\R^d$, by the
compactness of $E$, it suffices to consider the case that $\Gamma$ is bounded and on any line passing through $a$ there is at
most one point of $\Gamma$.
\vskip.125in
Let $\sigma$ denote the
surface measure on $\Gamma$. Let $\{p_i\}$ be a partition of unity on
$\Gamma$ such that in the
support of each $p_i$, $\Gamma$ has a local coordinate system
$u^i=(u_1^i,\dots,u_{d-1}^i)$. Thus we have a well-defined coordinate
system for the
cone $C_i=\{rx: x\in \text{supp}\, p_i, r\in\R-\{0\}\}$. We can also extend
$p_i$ to $C_i$ by setting $p_i(rx)=p(x), x\in\Gamma$. By changing
variables $x=r\,x(u^i)$ in each $C_i$, we have
\begin{equation}\label{partition}
  \begin{aligned}
\int_{\bigcup_r
  \Gamma_{a,r}}f(x)\,dx&=\sum_i\int_{C_i}p_i(x-a)f(x)\,dx\\&=\sum_i\iint
p_i(r\,x(u^i))f(r\,x(u^i)+a)|r|^{d-1}\phi_i(u^i)\,du^i\,dr\\&=\sum_i\int_\R\int_{\text{supp}\,p_i}
p_i(x)f(rx+a)|r|^{d-1}\tilde{\phi}_i(x)\,d\sigma(x)\,dr\\&=\int_\R\int_\Gamma f(rx+a)|r|^{d-1}\psi(x)\,d\sigma(x)\,dr,
\end{aligned}
\end{equation}
where $\phi_i,\tilde{\phi}_i,\psi$ are smooth cut-off functions. 
\vskip.125in
Since $\dH(E)=s>1$, for every $\epsilon>0$, there exists a measure
$\mu$ on $E$ such that the $(s-\epsilon)$-energy $I_{s-\epsilon}(\mu)<\infty$. Let $q(x)=rx+a$. Define measures
$\sigma_{a,r}$ on $\Gamma_{a,r}$ and $\mu_{a,r}$ on $E\cap \Gamma_{a,r}$
by
\begin{equation}\label{measure}
\begin{aligned}
\sigma_{a,r}&=q_*(\psi\sigma),\\
\mu_{a,r}&=\lim_{\delta\rightarrow 0}\mu*\rho_\delta \,d\sigma_{a,r},
\end{aligned}
\end{equation}
where $\rho_\delta(x)=\delta^{-d}\rho(\frac{x}{\delta})$, $\rho\in
C_0^\infty$ and $\int\rho=1$.
\vskip.125in
Let
\begin{displaymath}
g(r)=\begin{cases}
|r|^{d-1}\Phi(r) &\text{if}\ -1<r<1\\
\Phi(r) &\text{otherwise}
\end{cases}
\end{displaymath}
where $\Phi\in L^1$ and $\Phi>0$ everywhere.

\begin{lemma}\label{lem2}
Under notations above,
\begin{equation}
\iint g(r)I_{s-1-\epsilon}(\mu_{a,r})\,dr\,da<\infty.
\end{equation}
\end{lemma}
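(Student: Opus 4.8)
The plan is to unfold the energy $I_{s-1-\epsilon}(\mu_{a,r})$, integrate out the translation parameter $a$ first, and recognize the result as the autocorrelation (difference measure) of $\mu$ paired with a purely geometric kernel built from $\Gamma$. The dilation integral in $r$, together with the coarea-type identity \eqref{partition}, then reduces everything to the energy $I_{s-1-\epsilon}(\mu)$, which is finite because $\mu$ is compactly supported with $I_{s-\epsilon}(\mu)<\infty$ and $s-1-\epsilon<s-\epsilon$. Since every integrand in sight is nonnegative, all interchanges below are justified by Tonelli.

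Concretely, writing $f_\delta=\mu*\rho_\delta$ and using $d\mu_{a,r}(w)=f_\delta(w)\,d\sigma_{a,r}(w)$ with $\sigma_{a,r}=q_*(\psi\sigma)$, $q(x)=rx+a$, I would first expand
$$I_{s-1-\epsilon}(\mu_{a,r})=|r|^{-(s-1-\epsilon)}\iint_{\Gamma\times\Gamma}|x-y|^{-(s-1-\epsilon)}f_\delta(rx+a)f_\delta(ry+a)\psi(x)\psi(y)\,d\sigma(x)\,d\sigma(y).$$
Multiplying by $g(r)$ and integrating in $a$, the inner integral $\int f_\delta(rx+a)f_\delta(ry+a)\,da$ is an autocorrelation which converges, as $\delta\to0$, to the density $D_\mu(r(y-x))$ of the pushforward $\nu$ of $\mu\times\mu$ under $(p,q)\mapsto q-p$. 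To remain rigorous I would keep $\nu$ as a measure, establish a bound uniform in $\delta$, and pass to the limit at the very end.

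The geometric heart of the argument comes next. For fixed $x$ I would change variables via the map $(y,r)\mapsto z:=r(y-x)$ from $\Gamma\times\R$ to $\R^d$; a direct computation gives the Jacobian $|r|^{d-1}\,|(y-x)\cdot n(y)|$, where $n(y)$ is the unit normal to $\Gamma$. Since $|x-y|^{-(s-1-\epsilon)}$ contributes $|r|^{s-1-\epsilon}|z|^{-(s-1-\epsilon)}$ and cancels the prefactor $|r|^{-(s-1-\epsilon)}$, this rewrites the quantity of interest as
$$\iint g(r)\,I_{s-1-\epsilon}(\mu_{a,r})\,dr\,da=\int_{\R^d}|z|^{-(s-1-\epsilon)}K(z)\,d\nu(z),$$
$$K(z)=\int_\Gamma\psi(x)\sum_{(y,r):\,r(y-x)=z}\frac{g(r)\,\psi(y)}{|r|^{d-1}\,|(y-x)\cdot n(y)|}\,d\sigma(x).$$
It then suffices to prove $K(z)\lesssim1$ uniformly, for then the right-hand side is $\lesssim\int|z|^{-(s-1-\epsilon)}\,d\nu(z)=I_{s-1-\epsilon}(\mu)<\infty$.

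The hard part will be this uniform bound on $K(z)$, and it is exactly here that the nonzero Gaussian curvature of $\Gamma$ is used. The constraint $r(y-x)=z$ forces $y-x$ to be parallel to $z$, so for each $x$ the point $y$ is the second intersection of the line $x+\R z$ with $\Gamma$; local convexity (from the nonvanishing curvature) both guarantees at most one such $y$ and bounds the number of preimages. The denominator $|(y-x)\cdot n(y)|=|y-x|\,|\hat z\cdot n(y)|$ degenerates precisely at the silhouette $\{\hat z\cdot n=0\}$, where the two intersection points coalesce ($x\to y$) and, crucially, $r=|z|/|y-x|\to\infty$. I expect the factor $|r|^{-(d-1)}$ together with $\Phi\in L^1$ to turn the contribution near the silhouette into $\int r^{1-d}\Phi(r)\,dr\lesssim\|\Phi\|_{L^1}$, making the apparent singularity integrable for every $d\ge2$, while the complementary small-$r$ regime is absorbed by the factor $|r|^{d-1}$ built into $g$ on $|r|<1$. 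Quantifying the coalescence rate and the blow-up of $r$ near the silhouette in terms of the nondegenerate second fundamental form is the main technical obstacle; everything else is bookkeeping and the $\delta\to0$ limit.
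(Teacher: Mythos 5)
Your reduction is the right dual picture---it is, in physical space, exactly what the paper does on the Fourier side---but the key estimate you postpone, the uniform bound $K(z)\lesssim 1$, is \emph{false}, and it fails precisely at the silhouette degeneration you flag as the main obstacle. Test it on $\Gamma=S^{d-1}$ with $\psi\equiv 1$ near the relevant points. For $x\in\Gamma$ and $\hat z=z/\abs{z}$, the second intersection of the line $x+\R z$ with $\Gamma$ is $y=x-2(x\cdot\hat z)\hat z$; writing $c=\abs{x\cdot\hat z}$, one gets $\abs{y-x}=2c$, $\abs{\hat z\cdot n(y)}=c$, hence $\abs{(y-x)\cdot n(y)}=2c^{2}$ and $\abs{r}=\abs{z}/(2c)$. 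In the near-tangent range $2c\le\abs{z}$ we have $\abs{r}\ge 1$, so $g(r)=\Phi(r)$, and since the surface measure of the slab $\{x:\abs{x\cdot\hat z}\in[c,c+dc]\}$ is comparable to $dc$ for small $c$, these $x$ contribute to $K(z)$, up to constants,
\begin{equation*}
\int_0^{\abs{z}/2}\frac{\Phi\bigl(\abs{z}/(2c)\bigr)}{\bigl(\abs{z}/(2c)\bigr)^{d-1}\,2c^{2}}\,dc
=\frac{1}{\abs{z}}\int_1^\infty\frac{\Phi(u)}{u^{d-1}}\,du ,
\end{equation*}
by the substitution $u=\abs{z}/(2c)$. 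The right-hand side is a positive constant (recall $\Phi>0$) times $\abs{z}^{-1}$, so $K(z)\gtrsim\abs{z}^{-1}$ as $z\to 0$. The cancellation you hoped for between $\abs{r}^{-(d-1)}$ and the vanishing of $(y-x)\cdot n(y)$ does occur (that is what makes the $c$-integral converge at all), but it leaves exactly one uncancelled power of $\abs{z}^{-1}$; consequently your final line ``$\lesssim I_{s-1-\epsilon}(\mu)<\infty$'' cannot be reached.

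The gap is repairable, because the lemma's actual hypothesis is $I_{s-\epsilon}(\mu)<\infty$, one full power stronger than the energy you tried to use. The computation above reflects the true bound, which nonvanishing curvature does yield (locally $\abs{\hat z\cdot n(y)}\approx\kappa\abs{y-x}$, exactly as for the sphere): $K(z)\lesssim 1+\abs{z}^{-1}$. Since $\mu$, hence the difference measure $\nu$, is compactly supported, this gives $\iint g(r)I_{s-1-\epsilon}(\mu_{a,r})\,dr\,da\lesssim I_{s-\epsilon}(\mu)+I_{s-1-\epsilon}(\mu)<\infty$, and this lost power of $\abs{z}$ is precisely where the assumption $s>1$ earns its keep. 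It is also exactly what the paper's (genuinely different) proof produces on the frequency side: there one expands the energy via $\hat\mu$, integrates in $a$ by Plancherel, and uses stationary-phase decay of $\widehat{\psi\sigma}$ together with the structure of $g$ to produce the kernel $\abs{\xi-a}^{-(d-1)}$---whose spatial Fourier counterpart is your $\abs{z}^{-1}$, not a bounded kernel---before a scaling argument lands on $\int\abs{\hat\mu(a)}^2\abs{a}^{-d+s-\epsilon}\,da\approx I_{s-\epsilon}(\mu)$. If you pursue your route, you also owe two pieces of bookkeeping the Fourier route avoids: uniformity in $\delta$ when passing to the difference measure, and a uniform bound on the number of intersections of the lines $x+\R z$ with $\Gamma$ (nonzero Gaussian curvature does not make $\Gamma$ convex).
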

 Denote
$\nu_{a,r}=\frac{g(r)^{\frac{1}{2}}\mu_{a,r}}{|r|^{d-1}\mu_{a,r}(\R^d)}$,
$G_a=\{r\in\R:\mu_{a,r}(\R^d)>0\}$ and
$H=\{z\in\R^d:E\subset\bigcup_{r\in\R}\Gamma_{z,r}\}$. From Lemma \ref{lem2}, $I_{s-1-\epsilon}(\mu_{a,r})<\infty$ for a.e. $(r,a)\in
\R\times\R^{d}$. Then by \eqref{partition}, \eqref{measure}, for $a\in H$,
\begin{equation}
\begin{aligned}
1&=(\mu(\R^d))^2\\&=\left(\lim_{\delta\rightarrow
  0}\int_{\bigcup_r\Gamma_{a,r}}\mu*\rho_\delta(x)\,dx\right)^2\\&=\left(\lim_{\delta\rightarrow
  0}\iint\mu*\rho_\delta(rx+a)\abs{r}^{d-1}\psi(x)\,d\sigma(x)\,dr\right)^2\\&=\left(\lim_{\delta\rightarrow
  0}\iint\mu*\rho_\delta(x)|r|^{d-1}\,d\sigma_{a,r}(x)\,dr\right)^2\\&=\left(\int |r|^{d-1}\mu_{a,r}(\R^d) \,dr\right)^2\\&=\left(\int_{G_a}|r|^{d-1}
\mu_{a,r}(\R^d)I_{s-1-\epsilon}^{-\frac{1}{2}}(\mu_{a,r})I_{s-1-\epsilon}^{\frac{1}{2}}(\mu_{a,r})\,dr\right)^2\\&=\left(\int_{G_a}
I_{s-1-\epsilon}^{-\frac{1}{2}}(\nu_{a,r})g(r)^{\frac{1}{2}}I_{s-1-\epsilon}^{-\frac{1}{2}}(\mu_{a,r})\,dr\right)^2\\&\leq\int_{G_a}
I_{s-1-\epsilon}^{-1}(\nu_{a,r})\,dr
\int_{G_a} g(r)I_{s-1-\epsilon}(\mu_{a,r})\,dr.
\end{aligned}
\end{equation}
Therefore,
\begin{equation}\label{lem1}
\begin{aligned}
\int_{H}\left(\int_{G_a} I_{s-1-\epsilon}^{-1}(\nu_{a,r})dr\right)^{-1}da&\leq
\int_H\int_{G_a} g(r)I_{s-1-\epsilon}(\mu_{a,r})\,dr\,da\\&\leq\iint
g(r)I_{s-1-\epsilon}(\mu_{a,r})\,dr\,da\\&<\infty.
\end{aligned}
\end{equation}

It follows that for a.e. $a\in\R^d$, $\int_{G_a}
I_{s-1-\epsilon}^{-1}(\nu_{a,r})dr>0$. Hence for a.e. $a\in\R^d$,
$\mathcal{L}(G_a)>0$ and $\dH(E\cap\Gamma_{a,r})\geq s-1-\epsilon$ for
a.e. $r\in G_a$. Since $G_a$ is independent of $\epsilon$, by choosing a sequence $\epsilon_j\rightarrow0$, we
complete the proof of Theorem \ref{interthm}. 
\vskip.25in
\section{Proof of Lemma \ref{lem2}}
\vskip.125in
 From the the well-known equality
$I_\alpha(\mu)=c_{\alpha,d}\int
\abs{\hat{\mu}(\xi)}\abs{\xi}^{-d+\alpha}\,d\xi$ (see,
e.g. \cite{M95}) and Plancherel, we have
\begin{equation}\label{mainineq1}
\begin{aligned}
&\iint g(r)I_{s-1-\epsilon}(\mu_{a,r})\,dr\,da\\=&c_{\alpha,d}\iiint
g(r)\left|\hat{\mu}*\widehat{\sigma_{a,r}}(\xi)\right|^2|\xi|^{-d-1+s-\epsilon}\,d\xi
\,dr\,da
\\=&c_{\alpha,d}\iiint
g(r)\left|\int\hat{\mu}(\eta)\widehat{\psi\sigma}(r(\xi-\eta)) e^{-2\pi i a\cdot(\xi-\eta)}\,d\eta\right|^2|\xi|^{-d-1+s-\epsilon}\,d\xi
\,dr\,da\\=&c_{\alpha,d}\iint\left(\int
\left|\int\hat{\mu}(\eta)\widehat{\psi\sigma}(r(\xi-\eta)) e^{2\pi i
    a\cdot\eta}\,d\eta\right|^2da\right) \ g(r)|\xi|^{-d-1+s-\epsilon}\,d\xi \,dr
\\=&c_{\alpha,d}\iint\left(\int
\left|\hat{\mu}(a)\widehat{\psi\sigma}(r(\xi-a))\right|^2da\right)g(r)|\xi|^{-d-1+s-\epsilon}\,d\xi \,dr
\end{aligned}
\end{equation}

Since $\Gamma$ is smooth with nonzero Gaussian curvature everywhere, by
stationary phase \cite{St93} and
the construction of $g$, 
$$\int|\widehat{\psi\sigma}(r(\xi-a))|^2g(r)\,dr\lesssim|\xi-a|^{-(d-1)}.$$
Thus, to prove Lemma \ref{lem2}, it suffices to show
\begin{equation}\label{mainineq2}
\begin{aligned}
\iint|\hat{\mu}(a)|^2|\xi-a|^{-(d-1)}|\xi|^{-d-1+s-\epsilon}\,da\,d\xi<\infty.
\end{aligned}
\end{equation}

\vskip.125in
For each $a\neq0$, let $\xi=|a|\zeta$, then
$$\int|\xi-a|^{-(d-1)}|\xi|^{-d-1+s-\epsilon}\,d\xi=|a|^{-d+s-\epsilon}\int|\zeta-\frac{a}{|a|}|^{-(d-1)}|\zeta|^{-d-1+s-\epsilon}\,d\zeta.$$

When $\epsilon$ is small, $1<s-\epsilon<d$, then $\int|\zeta-\frac{a}{|a|}|^{-(d-1)}|\zeta|^{-d-1+s-\epsilon}\,d\zeta<\infty$
uniformly because $\frac{a}{|a|}\in S^{d-1}$ which is compact. Hence
\begin{equation}
\begin{aligned}
\iint|\hat{\mu}(a)|^2|\xi-a|^{-(d-1)}|\xi|^{-d-1+s-\epsilon}\,da\,d\xi&\lesssim\int|\hat{\mu}(a)|^2|a|^{-d+s-\epsilon}\,da\\&<\infty,
\end{aligned}
\end{equation}
which proves \eqref{mainineq2} and completes the proof of Lemma \ref{lem2}.
\vskip.125in

\newpage

\end{document}